\titleformat{\section}[hang]{\bf\centering}{\thesection.}{1em}{}
\titleformat{\subsection}[hang]{\slshape}{\thesubsection.}{1em}{}
\titleformat{\subsubsection}[hang]{\slshape}{\thesubsubsection.}{1em}{}
\newtheoremstyle{theoremdd}
  {\topsep}
  {4pt}
  {\itshape}
  {0pt}
  {\scshape}
  {.}
  { }
  {\thmname{#1}\thmnumber{ #2}\textnormal{\thmnote{ (#3)}}}
\theoremstyle{theoremdd}
\newtheorem{Teo}{Theorem}[section]
\newtheorem{Prop}[Teo]{Proposition}
\newtheorem{Lema}[Teo]{Lemma}
\newtheorem{Coro}[Teo]{Corollary}
\newtheorem{Def}[Teo]{Definition}
\newtheorem{Rule}{Rule}
\numberwithin{equation}{section} 
\def\l@chapter#1#2{\pagebreak[3]
   \vskip 1.0em plus 1pt  
   \@tempdima 1.5em       
   \begingroup
     \parindent \z@ \rightskip \@pnumwidth
     \parfillskip -\@pnumwidth
     \bfseries            
     \leavevmode          
     #1\hfil \hbox to\@pnumwidth{\hss #2}\par
   \endgroup}
\def\l@chapter{\@dottedtocline{0}{0em}{2.3em}}  
\def\l@section{\@dottedtocline{1}{1.5em}{2.3em}} 
\def\l@subsection{\@dottedtocline{2}{3.8em}{3.2em}}
\def\l@subsubsection{\@dottedtocline{3}{7.0em}{4.1em}}
\def\l@paragraph{\@dottedtocline{4}{10em}{5em}}
\def\l@subparagraph{\@dottedtocline{5}{12em}{6em}}
\let\OLDthebibliography\thebibliography
\renewcommand\thebibliography[1]{
  \OLDthebibliography{#1}
  \setlength{\parskip}{0pt}
  \setlength{\itemsep}{0pt plus 0.3ex}
 }
\newcolumntype{R}[1]{>{\RaggedRight}p{#1}}
\begin{document}
\sloppy 
%
%
\title{\Large\bf{The Cartesian method and Fermat's Last Theorem}}
\author{\scshape{\normalsize{By Hector Ivan Nunez}}}
\date {}
\maketitle
\let\thefootnote\relax\footnotetext{\textit{\today.}}
\let\thefootnote\relax\footnotetext{2020
AMS Mathematics Subject Classification: 11D41.}
\let\thefootnote\relax\footnotetext{\textit{Keywords:} Fermat's Last Theorem. Pythagorean triples. Cartesian method.}
%
%
\begin{abstract}
Fermat's Last Theorem is proved by using the philosophical and mathematical knowledge of 1637 when the French mathematician \textit{Pierre de Fermat} claimed to have a truly marvelous proof of his conjecture. Our approach consists of setting three variables of Fermat's equation as integers and then evaluating whether the remaining variable can be an integer as well. Pythagorean triples play a fundamental role in claiming that at least an irrational number is needed to satisfy Fermat's equation. As a result, we confirm that Fermat's Last Theorem is valid.
\end{abstract}
%
%
%
\section{Introduction}
%
While \textit{Sir Andrew Wiles} already proved Fermat’s Last Theorem in 1995 \cite{wiles}, there is a gap in scientific knowledge about proving it using the simple mathematical machinery available when \textit{Pierre de Fermat} conjectured it in 1637. This paper aims to present meaningful progress toward filling that gap.

We consider the philosophical method of analysis known today as the \textit {Cartesian method} \cite{MC}, which was coincidentally proposed in the latter mentioned year, 1637, by the French philosopher and mathematician \textit{René Descartes}.  The four rules of this method are described in Subsection~\ref{metodo}. The Cartesian method divides a complex problem into as many parts as possible until its fundamental composition is found. Then, we have to solve those simple parts that appear. Afterward, group them, and synthesize the problem. 

It is well-known that Fermat's Last Theorem, also referred to as the \textit{Fermat-Wiles Theorem}, has a mathematical structure remarkably similar to the Pythagorean Theorem. One may ask whether Fermat's claim can be studied using the Cartesian method with analogies to the Pythagorean theorem. In fact, that is what our exposition is about, and we give an affirmative answer to that question.  

An exciting account of the history of this famous theorem can be found in books \cite {SS} and articles. Therefore, we just say its statement:

\begin{quotation}
\noindent \textit{If $n$ is an integer greater than $2$, then there are no positive integers \mbox{$a,b,c$} that satisfy the equation $a^n+b^n=c^n$.}
\end{quotation} 

To prove Fermat's Last Theorem, we begin by developing a geometric representation of the equation \mbox{$a^n+b^n=c^n$}. Next, we study the arithmetic structure of that equation whenever $a,b,c$ and $n$ are positive integers with $n>2$. 

Then, we divide the problem into two parts: when $n$ is even and when $n$ is odd, with $n>2$ and $a, b, c$ as positive integers.

When $n$ is even, we use the Fundamental Theorem of Arithmetic, and again we divide the problem into more simple parts. Using analogies to the Pythagorean theorem, we investigate whether a simplified form of the equation $a^n+b^n=c^n$ is valid in these parts. The concept of Pythagorean triples makes a final invitation to divide the problem into several more parts once again. The Pythagorean triples point out that such an equation is not valid since there are no three integers but at least an irrational number in the triple $(a, b, c)$.

Next, to solve the remaining case when $n$ is odd and greater than~$2$, we consider perfect square numbers. Square numbers link to the case when $(a,b,c)$ are positive integers, and $n$ is odd. By contradiction, we find no three integers in the triple $(a, b, c)$ but at least an irrational number. Finally, when we synthesize the cases for $n$ an even integer and $n$ an odd integer, it results in a proof of Fermat's Last Theorem.

Section~\ref{final} shows that if $(a,b,c)$ are positive integers, then $n$ is irrational. As well, we prove that $a^n+b^n=c^n$ has no solutions with fractional numbers.
\begin{Def}
Define $a^n+b^n=c^n$ a Fermat's equation. In particular, a primitive Fermat's equation is when $gcd(a,b,c)=1$. 
\end{Def}
\noindent \textit{Remark.} We will analyze primitive Fermat's equations only. It suffices for our proof since non-primitive Fermat's equations are simply a multiple of those. 

Herein, the universe of discourse has a partition of three disjoint sets, namely $\mathbb{Z}^+$, \mbox{$\mathbb{Q}^*$} and $\mathbb{Q}^c$, such that $\mathbb{Z}^+ \cap \mathbb{Q}^*=\emptyset$,  $\mathbb{Z}^+ \cap \mathbb{Q}^c=\emptyset$, $\mathbb{Q}^* \cap \mathbb{Q}^c=\emptyset$ and \mbox{$\mathbb{Z}^+ \cup \mathbb{Q}^* \cup \mathbb{Q}^c=\mathbb{R}^+$}. The next section clears up this notation.
%
\section{Notation}
%
\begin{longtable}{p{0.12\textwidth}R{0.78\textwidth}}
\endhead
$a,b,c,n$ & Arbitrary positive integers with $b\leq a<c$, and $n>2$.\\
$(a,b, c)$ & A triple.\\
$gcd(a,b,c)$ & The greatest common divisor of the integers $a,b$ and $c$.\\
$d,s$& Positive odd integers.\\
$k,p,q,r$& Positive integers.\\
$\mathbb{Z}^+$& The set of positive integers.\\
$\mathbb{N}$& The set of positive integers and $0$.\\
$\mathbb{Q}^+$ & The subset of positive rational numbers.\\
$\mathbb{Q}^*$ & The subset of positive rational numbers with no integers. Then its elements are $p/q$ such that $q\neq 0$, $q\neq 1$ and \mbox{$gcd(p,q)=1$.}\\
$\mathbb{Q}^c$ & Positive irrational numbers with no integers or rationals.\\
$\mathbb{R}^+$& The set of positive real numbers.\\
$ \Delta ABC$ &  A triangle with $A,B$ and $C$ vertexes.\\
$x,y,h$& Real numbers.\\
$Even$ & Positive even integers.\\
$Odd$ & Positive odd integers.
\end{longtable}
%
\section{Mathematical structure of Fermat's Last Theorem}
%
\subsection{Geometric representation.}
\label{gr}
%
Section~\ref{gr} is devoted to developing a geometric representation of the equation \mbox{$a^n+b^n=c^n$} in the space of real numbers. That will help us understand the dynamics of the four variables $a,b,c,n$ from a geometrical point of view, especially when they are set as integers. This geometry will be referenced in Lemma~\ref{bnodividea} and Section~\ref{final}.

The equation \mbox{$a^n+b^n=c^n$} written as a function of $n$ is $c(n)=\sqrt[n]{a^n+b^n}$. We can graph this equation in the $(x,y)$ Cartesian plane, where $x,y \in \mathbb{R}^+$, $n$ is the independent variable, and $c$ is the dependent variable.

Let us draw the geometric representation of the equation $c(n)=\sqrt[n]{a^n+b^n}$ in a Pythagorean way, that is, like a triangle. In the first quadrant of the Cartesian plane, we draw $\Delta ABC$ with one side of length $a$ and another of length $b$. The third side is set to a length of $c=\sqrt[n]{a^n+b^n}$. Side $a$ extends from point $(0,0)$ to $(a, 0)$, while side $b$ has one end fixed at $(a,0) $.

\newpage
Let vertex $C$ be able to freely rotate according to the circle drawn by the side~$b$ from point $(a,0)$. Notice that the location of vertex $C$ depends on the value of $n$. 

Because $b\leq a<c $, the vertex $C$ of $ \Delta ABC$ cannot come into contact with the circle drawn by the side~$a$ from $(0,0)$. As~$n$ rises in value, vertex~$C$ approaches this circular area. If the value of~$n$ decreases, that is, $n\rightarrow 1$, then vertex~$C$ rotates to the right, approaching the $x$-axis. Following is an illustration of these details.

\begin{figure}[ht]
  \centering
  \caption{Fermat's Last Theorem graphed as a more general representation than Pythagoras' theorem.} 
  \label{fig:Fig1}
   \includegraphics[width=.98\textwidth]{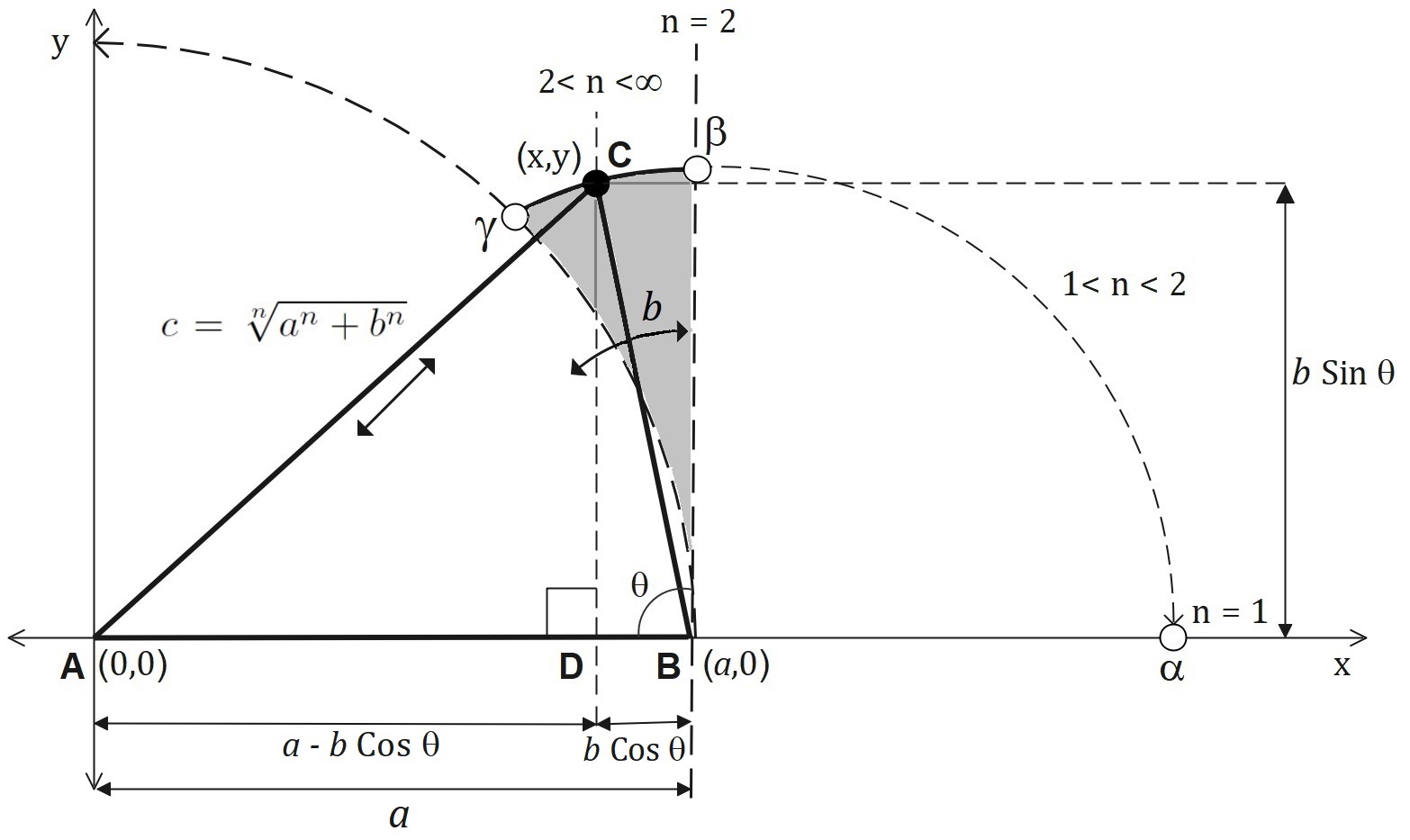}
\end{figure}

Let $S$ be the space formed by the points $(x,y)$ such that $a,b,c,n\in \mathbb{R}^+$ in the equation $c^n = a^n+b^n$ with $n>2$. That space is shown in the shaded area of Figure~\ref{fig:Fig1}, and is bounded between the \mbox{arc $\beta \gamma$} inclusive, and it is when $a=b$; \mbox {arc $\gamma B$} not inclusive because $c>a$, and line $B\beta $ not inclusive because $n>2$.
 
Certain relevant characteristics of Fermat's Last Theorem appear in this area. Suppose that~$a$ and~$b$ are positive integers. Then, there are~$b$ arcs $\beta \gamma $, and for each of them, there could be$^\dagger$ certain points for the vertex~$C$ such that~$c$ is also a positive integer. In Section~\ref{app}, we prove that~$n$ and~$\theta$ are irrationals at that point. 

Consider Figure~\ref{fig:Fig1}. Given that $a,b \in \mathbb{Z}^+$ and  $c, n \in \mathbb{R}^+$, then the minimum~$c$ occurs when~$c \rightarrow a$ and $n \rightarrow \infty$.  The maximum~$c$ in the arc $\beta \gamma $ occurs when $a=b$ and $n \rightarrow 2$, hence $a<c<a \sqrt{2}$.

Observe that $\Delta ABC$ tends to be an equilateral triangle whenever $a=b$ and $n\rightarrow \infty$. If $a>b$ and $n \rightarrow \infty$, then $\Delta ABC$ tends to be an isosceles triangle. $\Delta ABC$ is obtuse if $1<n<2$; otherwise, $\Delta ABC$ is acute if $n>2 $. At $n=2$, $\Delta ABC$ is a right triangle.
%
\subsection{Arithmetical structure}
%
Here, we develop some straightforward but valuable concepts that are so often needed to apply contradiction techniques \cite{HTP} that they deserve singling out.

\begin{Lema}
\label{abc_paresimpares}
Assume that equation $a^n+b^n=c^n$ holds for $a,b,c,n$ positive integers and $n>2$. Then a single element of the triple $(a,b,c)$ is even. 
\end{Lema}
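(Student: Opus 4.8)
The plan is to reduce the whole question to a parity computation modulo $2$, using the standing assumption $\gcd(a,b,c)=1$ that governs every equation in this paper. The single arithmetic fact I would invoke is that raising a positive integer to a positive power preserves its parity: for every positive integer $x$ and every $n\ge 1$ one has $x^n\equiv x\pmod 2$, since an even base yields an even power and an odd base an odd power. Reducing $a^n+b^n=c^n$ modulo $2$ and applying this termwise collapses the equation to the single parity relation $a+b\equiv c\pmod 2$. This is the only constraint on parities that the equation supplies, and the rest of the argument is just extracting its consequences.

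First I would observe that $a+b\equiv c\pmod 2$ is equivalent to $a+b+c\equiv 0\pmod 2$, that is, to the number of even members of $(a,b,c)$ being odd. Concretely: if all three were odd then $a^n+b^n$ would be even while $c^n$ would be odd, which is impossible; and in any ``exactly two even'' pattern one side of $a+b\equiv c$ comes out even and the other odd, so that pattern is impossible as well. Hence the even-count of the triple is neither $0$ nor $2$, and so it is forced to be odd, i.e. either exactly one or all three.

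It then remains only to rule out the all-even triple, and this is the one place where more than bare parity is used. If $a$, $b$ and $c$ were simultaneously even, then $2$ would divide each of them, whence $2\mid\gcd(a,b,c)$, contradicting the primitivity $\gcd(a,b,c)=1$ that is assumed throughout. With the all-even case excluded, the even-count cannot be $3$, and since it is already neither $0$ nor $2$, it must be exactly $1$. That is precisely the assertion that a single element of $(a,b,c)$ is even.

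I expect the all-even triple to be the only genuine obstacle. Every other parity pattern is eliminated by the relation $a+b\equiv c\pmod 2$ alone, in a finite and mechanical check; but the all-even pattern is perfectly consistent with that relation and can be discarded only by appealing to the coprimality of the triple. So the heart of the proof is not the parity bookkeeping but this one use of primitivity, which is what sharpens ``odd even-count'' down to \emph{exactly} one even element.
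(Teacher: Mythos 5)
Your proof is correct and takes essentially the same approach as the paper's: both rest on the fact that $x^n$ has the same parity as $x$, eliminate the all-odd and two-even-one-odd patterns by parity, and invoke $\gcd(a,b,c)=1$ to exclude the all-even triple. The only difference is cosmetic — you compress the case analysis into the single congruence $a+b\equiv c \pmod 2$ and a count of even entries, whereas the paper enumerates the parity patterns explicitly.
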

\begin{proof}
Let us prove it by contradiction. Any odd raised to a positive integer power is odd, and any even raised to a positive integer power is even. The sum of two odd numbers is even, and the sum of an odd number with an even number is odd. Therefore, it is impossible that in the equation $c^n=a^n+b^n$, we have a triple $(a,b,c)$ with all its elements odd because otherwise, we would have an odd number equal to an even one, which is absurd.

Suppose that we have two even and one odd in the triple $(a,b,c)$. If the odd is~$c$, then the sum $a^n+b^n$ is even, then the equality is absurd. If~$c$ is an even number, the sum of an odd and an even number in $a^n+b^n$ is odd; then, the equality is absurd. Therefore, in the triple $(a,b,c)$, there do not exist two even and one odd.

The case when the three elements of the triple $(a, b,c)$ are even is not considered because we stated in the Introduction (Assumptions) that the equation $c^n=a^n+b^n$ is simplified, then $a,b,c$ have no common factors. 
\end{proof}
\begin{Coro}
\label{trio2kd}
Assume that equation $a^n+b^n=c^n$ holds for $a,b,c,n$ positive integers and $n>2$. Applying the Fundamental Theorem of Arithmetic follows that this equation has one of the following three forms:
\begin{align}
\label{2kD=a+b}
(2^{k}d)^n&=a^n+b^n && \text{a, b, d are odd and $k\in\mathbb{Z}^+$},\\
\label{Ec2}
c^n&=a^n+(2^{k}d)^n && \text{a, c, d are odd and $k\in\mathbb{Z}^+$},\\
\label{Ec3}
c^n&=(2^{k}d)^n+b^n && \text{b, c, d are odd and $k\in\mathbb{Z}^+$}.
\end{align}
\end{Coro}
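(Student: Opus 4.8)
The plan is to show that this corollary is an immediate bookkeeping consequence of Lemma~\ref{abc_paresimpares} together with unique factorization. First I would invoke Lemma~\ref{abc_paresimpares}, which guarantees that exactly one of the three bases $a,b,c$ is even while the remaining two are odd. Thus there are precisely three mutually exclusive and exhaustive cases, according to which base carries the even element.

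Next I would apply the Fundamental Theorem of Arithmetic to that unique even base. Writing the even base generically as $m$, its prime factorization contains the prime $2$ with some exponent $k$; since $m$ is even this exponent is strictly positive, $k\geq 1$. Collecting every remaining prime factor into a single integer $d$, the uniqueness clause of the factorization forces $d$ to contain no factor of $2$, so that $d$ is odd, and we obtain $m=2^{k}d$ with $d$ odd and $k\in\mathbb{Z}^{+}$.

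Finally I would match each case to the claimed form. If $c$ is the even base, substituting $c=2^{k}d$ yields form~\eqref{2kD=a+b}, where $a$ and $b$ are odd by Lemma~\ref{abc_paresimpares} and $d$ is odd by construction; the cases in which $b$ or $a$ is the even base give forms~\eqref{Ec2} and~\eqref{Ec3} respectively, by the same substitution. Since the three cases are disjoint and cover all possibilities, the equation must take exactly one of the three stated forms.

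I do not expect any genuine obstacle, as the statement is essentially a repackaging of the preceding lemma. The only points requiring a moment's care are verifying that the exponent of $2$ is strictly positive, so that $k\in\mathbb{Z}^{+}$ rather than merely $k\geq 0$, and confirming that the cofactor $d$ is genuinely odd; both follow at once from the uniqueness of prime factorization.
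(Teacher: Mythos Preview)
Your proposal is correct and matches the paper's approach: the paper gives no separate proof for this corollary, treating it as an immediate consequence of Lemma~\ref{abc_paresimpares} and the Fundamental Theorem of Arithmetic, which is exactly the reasoning you have spelled out.
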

\textit{Remark.} Notice that \eqref{Ec2} and \eqref{Ec3} look-alike from the commutative property of addition point of view. Nevertheless, we have stated that $b\leq a<c$. Then, the three equations will be evaluated individually by cases to keep the generality and formality.

\begin{Lema}
\label{cnoesQ}
Suppose $a^n+b^n=c^n$ has $a,b,n$  as positive integers with $n>2$. Then, $c$ is not a fractional number.  Furthermore, $c\in \mathbb{Z}^{+}$ or $c \in \mathbb{Q}^{c}$.  
\end{Lema}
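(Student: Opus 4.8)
The plan is to reduce the statement to the classical fact that an $n$-th root of a positive integer is either an integer or irrational, and then read off the conclusion directly from the three-set partition of $\mathbb{R}^+$ introduced in the Notation. First I would observe that, since $a,b,n$ are positive integers, the quantity $N:=a^n+b^n$ is a positive integer, and $c$ is its positive real $n$-th root, $c=\sqrt[n]{N}$; in particular $c\in\mathbb{R}^+$, so exactly one of $c\in\mathbb{Z}^+$, $c\in\mathbb{Q}\setminus\mathbb{Z}$, $c\in\mathbb{Q}^c$ must hold.

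The core of the argument is to rule out the middle case by contradiction. Suppose $c\in\mathbb{Q}\setminus\mathbb{Z}$, so that $c=p/q$ with $p,q$ positive integers, $q>1$ and $gcd(p,q)=1$. Raising to the $n$-th power gives $p^n/q^n=c^n=N$, a positive integer, whence $q^n\mid p^n$. At this point I would invoke the Fundamental Theorem of Arithmetic: since $gcd(p,q)=1$, no prime divides both $p$ and $q$, and therefore no prime divides both $p^n$ and $q^n$, i.e. $gcd(p^n,q^n)=1$. A divisor that is coprime to its dividend must equal $1$, so $q^n=1$ and thus $q=1$, contradicting $q>1$. This eliminates the possibility that $c$ is a fractional number.

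Having discarded the case $c\in\mathbb{Q}\setminus\mathbb{Z}$, the partition $\mathbb{R}^+=\mathbb{Z}^+\cup(\mathbb{Q}\setminus\mathbb{Z})\cup\mathbb{Q}^c$ leaves only $c\in\mathbb{Z}^+$ or $c\in\mathbb{Q}^c$, which is precisely the stated dichotomy.

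As for difficulty, there is no genuine obstacle here: the argument is elementary. The one step deserving care is the passage from $gcd(p,q)=1$ to $gcd(p^n,q^n)=1$, which should be justified from unique factorization rather than merely asserted, since it is the single place where the coprimality hypothesis is actually used; everything else is a routine manipulation of the partition and of the equation $c^n=N$.
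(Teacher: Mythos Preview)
Your proposal is correct and follows essentially the same route as the paper: assume $c=p/q$ in lowest terms with $q>1$, observe that $(p/q)^n$ must equal the integer $a^n+b^n$, and derive a contradiction from the irreducibility of $p^n/q^n$. The only difference is that you spell out, via the Fundamental Theorem of Arithmetic, why $\gcd(p,q)=1$ forces $\gcd(p^n,q^n)=1$, whereas the paper simply asserts that $(p/q)^n$ is irreducible.
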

\begin{proof}
Let us prove it by contradiction.  Assume that $c \in \mathbb{Q}^*$ such that $c=p/q$ with $q\not\in \{0,1\}$. Then \mbox{$c^n=a^n+b^n$} can be written as $(p/q)^n=a^n+b^n$. The right-hand side of the equal sign is a positive integer since $a,b,n\in \mathbb{Z}^+$; however, the left-hand side is a fractional number since $(p/q)^n$ is irreducible. Therefore, the equation is absurd.
\end{proof}

\begin{Coro}
\label{noracionales}
Suppose that two elements in $(a,b,c)$ are positive integers. Then, to satisfy $a^n+b^n=c^n$ with $n>2$, the third element is not a fractional number.  
\end{Coro}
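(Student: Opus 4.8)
The plan is to reduce the statement to Lemma \ref{cnoesQ} by exploiting the symmetry among the three positions of the triple, splitting into three cases according to which entry is the ``third element.'' The essential tool in every case is the same arithmetic fact already used in Lemma \ref{cnoesQ}: if a positive rational is written in lowest terms as $p/q$ with $q\geq 2$, then its $n$-th power $p^n/q^n$ is again in lowest terms, because $\gcd(p,q)=1$ forces $\gcd(p^n,q^n)=1$ by the Fundamental Theorem of Arithmetic. Hence such an $n$-th power is itself a fractional number and can never equal a positive integer.

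First I would dispose of the case in which the two integers are $a$ and $b$, so that the third element is $c$. Here $a,b,n$ are positive integers with $n>2$, which is precisely the hypothesis of Lemma \ref{cnoesQ}; that lemma already gives $c\notin\mathbb{Q}\setminus\mathbb{Z}$, and nothing further is required.

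Next I would treat the case in which the two integers are $a$ and $c$, so the third element is $b$. Rewriting the equation as $b^n=c^n-a^n$ and using $b\leq a<c$, the right-hand side is a positive integer. Assuming for contradiction that $b\in\mathbb{Q}\setminus\mathbb{Z}$, write $b=p/q$ in lowest terms with $q\geq 2$; then $b^n=p^n/q^n$ is an irreducible fraction with denominator $q^n\geq 2$, hence a fractional number, contradicting that it equals the integer $c^n-a^n$. The final case, where the two integers are $b$ and $c$ and the third element is $a$, is handled identically starting from $a^n=c^n-b^n$.

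I do not expect any serious obstacle, since the whole content is contained in the irreducibility of $p^n/q^n$ already established in the proof of Lemma \ref{cnoesQ}. The only point requiring a little care is to confirm, in the two subtracted cases, that the right-hand side is a genuine positive integer, which follows immediately from the ordering $b\leq a<c$ assumed throughout.
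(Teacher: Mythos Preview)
Your proof is correct and follows exactly the approach the paper intends: the corollary is stated immediately after Lemma~\ref{cnoesQ} without a separate proof, as a direct extension of that lemma's irreducibility argument to the remaining two positions of the triple. Your case split and the use of $b\leq a<c$ to ensure positivity of the subtracted side are the natural way to spell this out.
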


\begin{Lema}
\label{bnodividea}
Let $b\leq a<c$ with $b\mid a$. Then there are no positive integers \mbox{$a,b,c$} that satisfy  $a^n+b^n=c^n$ with $n>2$. In particular, Fermat's Last Theorem is valid.
\end{Lema}

\begin{proof}
If $b\mid a$, then \mbox{$a/b\in\mathbb{Z}^+$}.
Rewriting \eqref{2kD=a+b} gives $$\frac{2^kd}{b}=\sqrt[n]{(a/b)^n+1}.$$ 

If~$n=2$, then $(2^kd)/b\in\mathbb{Q}^c$ since there is no Pythagorean triple that has a number~$1$ in the triple. In fact, $a/b<\sqrt{(a/b)^2+1}<a/b+1$.
    
Let us generalize$^\dagger$ this to $n>2$. From Figure~\ref{fig:Fig1}, we see that if $n$ increases, then $c=2^kd$ decreases, thus $(2^kd)/b$ decreases. Clearly, it is deduced that  $a/b<\sqrt[n]{(a/b)^n+1}<a/b+1$. Substituting gives \mbox{$a/b<(2^kd)/b<a/b+1$}. 

Awfully well, the number $(2^kd)/b$ is \textit{locked down} between two continuous integers. Considering Corollary~\ref{cnoesQ} follows that $2^kd\in\mathbb{Q}^c$.
\end{proof}

\begin{Prop}
\label{a=b}
Let $a=b$. Then there are no positive integers \mbox{$a,b,c$} that satisfy $a^n+b^n=c^n$ with $n>2$. In particular, Fermat's Last Theorem is valid.
\end{Prop}

\begin{proof}
If $a=b$, then \eqref{2kD=a+b} gives $2^kd=a\sqrt[n]{2}$, resulting in  $2^kd\in\mathbb{Q}^c$.
\end{proof}

\begin{Prop}
Suppose that $a,b,c$ and $n$ are integers such that $n>2$. Then \mbox{$gcd(a,b,c)=1$} if and only if \mbox{$gcd(a^n,b^n,c^n)=1$.}
\end{Prop}
%
\section{Fermat's Last Theorem proof}
\label{demo}
%
\subsection{n is an even integer greater than 2}
\label{npar}
%
Following the Cartesian method, let us divide Corollary~\ref{trio2kd}'s equations into two parts, one for $d=1$ and the other for $d>1$. 

%
\subsubsection{d=1}

\begin{Lema}
\label{2knpar}
Let $n$ be even greater than 2. If one element in the triple $(a,b,c)$ has the form $2^k$ where $k\in\mathbb{Z}^+$, then there are no positive integers \mbox{$a,b,c$} satisfying the equation $a^n+b^n=c^n$.  In particular, Fermat's Last Theorem is valid.
\end{Lema}

\begin{proof}
 By contradiction, we will prove that $2^{k}\in\mathbb{Q}^c$. According to Lemma~\ref{abc_paresimpares}, in $(a,b,c)$ there exist one even and two odd. Let us assume that the even is $2^{k}$ where $k\in\mathbb{Z}^{+}$. Next, by cases, we  analyze the three equations of Corollary~\ref{trio2kd} with $d=1$.

\textit{Case 1}. Notice that the left-hand side of \eqref{2kD=a+b} with $d=1$ is $2^k$, which is an even integer because $k\neq 0$ for $b\leq a<2^k$. Since $gcd(a,b,2^k) =1$, by Lemma~\ref{abc_paresimpares}, we know that $a$ and $b$ are odd. 

In summarizing, we have a primitive Fermat's equation with $n$ as an even integer greater than $2$, $a$ and $b$ are odd integers, $c=2^{k}$ (even), $k \in \mathbb{Z}^+$, $b \leq a<2^k$, and it has the form
\begin{align}
\label{2kn}
2^{kn}&=a^n+b^n.
\end{align}

Let us do an artifice. Rewriting \eqref{2kn} similar to Pythagoras' theorem gives
\begin{align}
\label{2kpitagoras}
[2^{kn/2}]^2=[a^{n/2}]^2+[b^{n/2}]^2.
\end{align}
\textit{Remark.} Since $n$ is even and $a, b$ are odd, then $a^{n/2}$ and $b^{n/2}$ are odd, and $2^{kn/2}$ is even. Besides, the upper exponent is $2$, and \mbox{$gcd(a^{n/2}, b^{n/2}, 2^{kn/2})=1$.} Clearly, a \textit{necessary condition} for \eqref{2kpitagoras} is that $(a^{n/2}, b^{n/2}, 2^{kn/2})$ must be a \textit{primitive} Pythagorean triple. 

 The Pythagorean triples have been studied since the Babylonians between 1900 BC and \mbox{1600 BC \cite{TP}}. Today, we know that \textit{Euclid's Formula} is a generator of \textit{all} primitive Pythagorean triples but not all the non-primitive \cite{PPT}. 
 
 Recall that $gcd(a,b,2^k)=1$, then we are interested in primitive Pythagorean triples only, which requires that the hypotenuse $2^{kn/2}$ is odd. But it  contradicts the fact that $2^{kn/2}$ is even; therefore, \eqref{2kn} is impossible with integers. Lemma~\ref{cnoesQ} and Corollary~\ref{noracionales} assert that $2^{k}\notin \mathbb{Q}^*$. Then, within $(a,b,2^k)$ there exist at least an irrational number. Since $a$ and $b$ are odd, then $2^{k}\in\mathbb{Q}^{c}$. 

\textit{Case 2}. Suppose \eqref{Ec2} with $d=1$.  Then the equation to analyze is
\begin{align}
\label{bpar}
c^n= a^n+2^{kn}.
\end{align}

Rewriting \eqref{bpar} similar to the Pythagorean theorem gives
\begin{align}
\label{b=2k}
[c^{n/2}]^2=[a^{n/2}]^2+[2^{kn/2}]^2.
\end{align}
\textit{Remark.} Since $n$ is even and $a,c$ are odd, then $c^{n/2}, a^{n/2}$ are odd, and $2^{kn/2}$ is even. Besides, $gcd(a,2^k,c)=gcd(a^{n/2}, 2^{kn/2}, c^{n/2})=1$. Since the upper exponent in \eqref{b=2k} is $2$, a \textit{necessary condition} for \eqref{b=2k} is that  $(a^{n/2}, 2^{kn/2}, c^{n/2})$ must be a \textit{primitive} Pythagorean triple. 

Diophantus may have written \textit{Euclid's Formula} as we know it \mbox{today \cite{TP}}.  It points out that to generate the primitive Pythagorean triple $(a^{n/2}, 2^{kn/2}, c^{n/2})$, the following Euclid's Formula must be satisfied for two relative primes, $p$ and $q$, with $p>q$ and not both odd: 
\begin{align}
\label{p2+q2}
c^{n/2}&=p^2+q^2,\\
\label{p2-q2}
a^{n/2}&=p^2-q^2,\\
\label{2pq}
2^{kn/2}&=2pq.
\end{align}

Solving for $p$ in \eqref{2pq} gives $p=2^{kn/2}/(2q)$. Substituting $p$ in \eqref{p2+q2} gives $c^{n/2}=[2^{kn/2}/(2q)]^2+q^2$ which is simplified as
\begin{align}
\label{meollo}
c^{n/2}&=\frac{2^{kn-2}}{q^2}+q^2.     
\end{align}
First,  consider \eqref{meollo} with $q$ even.
\begin{enumerate}
    \item Suppose that $q^2=2^{kn-2}$. Squaring both sides of \eqref{2pq} gives $2^{kn-2}>q^2$, which is a contradiction. Therefore, it is impossible that $q^2=2^{kn-2}$.
    
    \item Assume that $q$ is even with $q^2\mid 2^{kn-2}$ and $2^{kn-2}\neq q^{2}$. Then  $c^{n/2}$ is even, contradicting the fact that $c^{n/2}$ is odd. Hence, it is impossible.
    
    \item Assume that $q$ is even with $q^2\nmid 2^{kn-2}$. Then $c^{n/2}\in\mathbb{Q}^*$, which contradicts that $c^{n/2}$ is odd. Therefore, it is impossible that $q^2\nmid 2^{kn-2}$.  
 \end{enumerate}
Second, consider \eqref{meollo} with $q$ odd.
\begin{enumerate}
    \item Suppose $q=1$. Solving \eqref{p2-q2} gives $p^{2}=a^{n/2}+1$. Substituting in \eqref{p2+q2} and rewriting gives $c^{n/2}-a^{n/2}=2$. Such equality is impossible since $a$ and $c$ are odd with $a<c$, and the minimum difference between them is $2$, but when raising $a$ and $c$ to an integer power greater than $1$, then the difference is greater than $2$. Since $n/2>1$, then $c^{n/2}-a^{n/2}>2$, which is a contradiction. Hence, $q=1$ is impossible.
    
    \item Assume $q$ is odd and greater than $1$. Then $c^{n/2}\in\mathbb{Q}^*$ since $q^2 \nmid 2^{kn-2}$, which contradicts the fact that $c^{n/2}$ is odd. Therefore, $q$ odd greater than $1$ is impossible.
 \end{enumerate}

Thus, \eqref{bpar} cannot be satisfied with integers. From Corollary~\ref{cnoesQ}, it follows that within $(a,2^k,c)$ there exists at least an irrational number. Since $a,c$ are odd, clearly $2^{k}\in\mathbb{Q}^{c}$.

\textit{Case 3}. Suppose \eqref{Ec3} with $d=1$.  Then the equation to analyze is \mbox{$c^n= 2^{kn}+b^n$}. Similar to Case 2, and using the commutative property of addition, it is concluded by contradiction that $2^{k}\in\mathbb{Q}^{c}$.
\end{proof}
\textit{Remark}. The value of Lemma \ref{2knpar} is that it proves that \mbox{$\sqrt[n]{a^n+ b^n}$}, \mbox{$\sqrt[n]{c^n-a^n}$} and \mbox{$\sqrt[n]{c^n-b^n}$} are irrationals when $n$ is even greater than $2$. It is a step forward to prove that $c$ must be irrational to satisfy $a^n+b^n=c^n$ whenever $n$ is even greater than $2$.
%
\subsubsection{d is odd greater than 1}
%
By the Fundamental Theorem of Arithmetic, $2^kd$ with $k\in\mathbb{Z}^+$ and $d$ odd represents all positive even integers. Clearly, if we apply the transitive axiom of equality among $2^h$, $\sqrt[n]{a^n+b^n}$ and $2^{k}d$, by Lemma~\ref{2knpar} (Case 1) follows that $2^{k}d=2^h\in\mathbb{Q}^c$. A similar situation occurs in cases 2 and 3. The next Lemma shows a formal proof, and the next Corollary covers all cases when $d\geq 1$.

\begin{Lema}
\label{2kdnpar}
Let $n$ be even greater than 2. If one element in the triple $(a,b,c)$ has the form $2^kd$ where $k\in\mathbb{Z}^+$ and $d$ is odd greater than 1, then there are no positive integers \mbox{$a,b,c$} satisfying the equation $a^n+b^n=c^n$.  In particular, Fermat's Last Theorem is valid.
\end{Lema}

\begin{proof}
Let us analyze the equation $c^n=a^n+b^n$ by cases.

\textit{Case 1.} Suppose \eqref{2kD=a+b} such that $d$ is an odd integer greater than $1$ and $k\in\mathbb{Z}^+$. Notice that $2^{k}d$ can be expressed as $2^{h}$, with $h\in\mathbb{R}$. Then $2^{k}d=2^{h}$. Using Lemma~\ref{2knpar} (Case 1) with $c=2^{h}$ and $a,b$ odd such that $[2^{h}]^{n}=a^{n}+b^{n}$, follows that  $2^h\in\mathbb{Q}^c$. Therefore, $2^{k}d=2^h\in\mathbb{Q}^c$. By contradiction, $2^{k}d\in\mathbb{Q}^{c}$. 

\textit{Case 2.} Assume \eqref{Ec2} such that $d$ is odd greater than $1$, and $k\in\mathbb{Z}^+$. Observe that $2^{k}d$ can be expressed as $2^{h}$, with $h\in\mathbb{R}$. Then $2^{k}d=2^{h}$. Invoking Lemma~\ref{2knpar} \mbox{(Case 2)} with $b=2^{h}$ and $a,c$ odd such that $c^{n}=a^{n}+[2^{h}]^n$, it is concluded that $2^{h}\in\mathbb{Q}^{c}$.  Therefore, by contradiction, $2^{k}d\in\mathbb{Q}^{c}$. 

\textit{Case 3.} 
Suppose \eqref{Ec3} with $d$ odd greater than $1$ and $n$ even greater than~$2$. Analogously, applying Lemma~\ref{2knpar} \mbox{(Case 3)} follows that in $c^{n}=[2^{h}]^n+b^n$ gives \mbox{$2^h=2^{k}d\in\mathbb{Q}^{c}$}.
\end{proof}
\subsubsection{Synthesis for n as an even integer greater than 2}
\begin{Coro}
\label{FLTpar}
Fermat's Last Theorem is true for all $n$ even integers greater than $2$.
\end{Coro}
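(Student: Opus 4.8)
The plan is to argue by contradiction, assembling the two preceding lemmas into an exhaustive case analysis that realizes the synthesis step of the Cartesian method. Suppose some even integer $n>2$ admits a primitive solution $a^n+b^n=c^n$ in positive integers with $\gcd(a,b,c)=1$. By Lemma \ref{abc_paresimpares}, exactly one element of the triple $(a,b,c)$ is even while the other two are odd. Applying the Fundamental Theorem of Arithmetic as in Corollary \ref{trio2kd}, I would write that single even element in the canonical form $2^{k}d$ with $d$ odd and $k\in\mathbb{Z}^{+}$, placing the equation into exactly one of the three shapes (\ref{2kD=a+b}), (\ref{Ec2}), (\ref{Ec3}).

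Next I would split on the value of $d$. If $d=1$, the even element is $2^{k}$ with $k\in\mathbb{Z}^{+}$, and the hypotheses of Lemma \ref{2knpar} are met, since two of the entries are positive integers and the third has the form $2^{k}$; that lemma then forces $2^{k}\in\mathbb{Q}^{c}$. If instead $d>1$ is odd, the even element is $2^{k}d$, and Lemma \ref{2kdnpar} applies verbatim, forcing $2^{k}d\in\mathbb{Q}^{c}$. In either branch the conclusion is that the even member of the triple is irrational.

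The contradiction is then immediate: by assumption that element is a positive integer, yet $\mathbb{Z}^{+}\cap\mathbb{Q}^{c}=\emptyset$ in the partition fixed in the Introduction, so it cannot simultaneously lie in $\mathbb{Q}^{c}$. Hence no primitive solution exists for any even $n>2$; and since any hypothetical solution with $\gcd(a,b,c)=g>1$ reduces to a primitive one upon dividing through by $g$, Fermat's Last Theorem holds for all even $n>2$ without the primitivity restriction.

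I do not expect a genuine mathematical obstacle in this step, because the substance of the argument has already been discharged inside Lemmas \ref{2knpar} and \ref{2kdnpar}; the corollary merely collects them. The only points that require care are confirming that the dichotomy $d=1$ versus $d$ odd greater than $1$ is exhaustive for the odd part of $2^{k}d$, so that every possible even element falls under one of the two lemmas, and checking that whichever of the three forms (\ref{2kD=a+b})--(\ref{Ec3}) actually occurs is covered by the corresponding Case $1$, $2$, or $3$ internal to those lemmas. Verifying this exhaustiveness is the entire content of the synthesis.
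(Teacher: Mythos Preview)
Your proposal is correct and matches the paper's approach: the paper states Corollary~\ref{FLTpar} without proof, treating it as the immediate synthesis of Lemmas~\ref{2knpar} and~\ref{2kdnpar}, and your write-up simply spells out that synthesis---contradiction via Corollary~\ref{trio2kd} plus the exhaustive split on $d=1$ versus $d>1$ odd. There is nothing to add or correct at this step; all substantive content lives in the two lemmas you invoke.
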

\begin{proof}
It follows by combining Lemma~\ref{2knpar} and Lemma~\ref{2kdnpar}.
\end{proof}
The objective of this Section~\ref{npar} has been reached. Next, we will address the other half of the proof.
%
\subsection{n is an odd integer greater than 2}
\label{nimpar}
Our approach to this case is via mathematical artifices to make Corollary~\ref{trio2kd}'s equations look as if $n$ were even. That will enable us to use what we have already proved in Section~\ref{npar}. Again, let us address the problem by cases. First, we will consider triples $(a,b,c)$ that have perfect square numbers only, and then we will generalize it to any positive integer within the triple.
%
\subsubsection{a,b,c are perfect square numbers}
\label{squaresection}
%
\begin{Lema}
\label{square}
Let $n$ be odd, greater than 2. Then there are no perfect square numbers \mbox{$a,b,c$} that satisfy the equation $a^n+b^n=c^n$.  In particular, Fermat's Last Theorem is valid.
\end{Lema}

\begin{proof}
Consider \eqref{2kD=a+b}, \eqref{Ec2}, and \eqref{Ec3}, where $(a,b,c)$ is a triple with three perfect square numbers. By contradiction, we will prove that  \mbox{$2^{k}d \in \mathbb{Q}^{c}$.} 

\textit{Case 1}. Suppose \eqref{2kD=a+b} with $a,b$  odd integers, $2^{k}d$ an even integer, and all of them are perfect square numbers. Rewriting \eqref{2kD=a+b} similar to Pythagoras' theorem gives
\begin{align}
\label{pita}
[(2^{k}d)^{n/2}]^2=[a^{n/2}]^2+[b^{n/2}]^2.
\end{align}
\textit{Remark}. Since $a,b, 2^{k}d$ are positive and perfect square numbers, if we apply to each of them a square root, they are still integers. Recall that we are considering positive numbers only. Therefore, $a^{n/2}$, $b^{n/2}$ are odd, and $(2^{k}d)^{n/2}$ is even. It is evident that a \textit{necessary condition} for \eqref{pita} is that $(a^{n/2}, b^{n/2}, (2^{k}d)^{n/2})$ must be a \textit{primitive} Pythagorean triple. Let us consider a similar procedure as shown in Section~\ref{npar}.
\begin{enumerate}
    \item Let $d=1$. 
     This case is proved by following similar reasoning as shown in Lemma~\ref{2knpar} (Case 1) under the above-mentioned conditions. Therefore, by contradiction, $2^{k}\in\mathbb{Q}^c$.
    
    \item Let $d>1$ and $d$ odd. This case is proved by following similar steps as explained in Lemma~\ref{2kdnpar} (Case 1). Therefore,  by contradiction, we assert that $2^{k}d\in\mathbb{Q}^c$. 
    
    \item Synthesizing, $2^{k}d\in\mathbb{Q}^c$.
\end{enumerate}

\textit{Case 2}. Suppose \eqref{Ec2} where $a,c$  are odd and $2^{k}d$ is even, and all of them are perfect square numbers. Suppose $n$ is an odd integer greater than $2$. Let us follow an analogous procedure as shown in Case 1. Invoking the procedure of Lemma~\ref{2knpar} \mbox{(Case 2)} and Lemma~\ref{2kdnpar} (Case 2), we conclude that $2^{k}d\in\mathbb{Q}^c$.

\textit{Case 3}. Suppose \eqref{Ec3} where $b,c$  are odd and $2^{k}d$ is even, and all of them are perfect square numbers. Let us follow an analogous procedure as shown in \mbox{Case 1}. Invoking the procedure of Lemma~\ref{2knpar} (Case~3) and Lemma~\ref{2kdnpar} (Case~3), we conclude that $2^{k}d\in\mathbb{Q}^c$.
\end{proof}

\begin{Coro}
\label{square2}
Let $n$ be odd, greater than 2. Suppose  $(a,b,c)$ is a triple with two odd-perfect square numbers. Then the other element in the triple is irrational. In particular, Fermat's Last Theorem is valid.
\end{Coro}
%
\subsubsection{Generalization to a,b,c as positive integers}
\label{usarfig1}
%
Let us consider the arithmetic structure of equations that have the form \mbox{$c^n=a^n+b^n$} where the triple $(a,b,c)$ has two odd perfect square numbers and an irrational. For simplicity's sake, we are not interested in equations that have at least an even perfect square number and an irrational since they are not aligned to Corollary~\ref{square2}. However, one may see that the same conclusions arise in that \mbox{situation.}
\begin{Lema}
\label{abirrac}
Let $n$ be odd, greater than 2. Then there are no integers \mbox{$a,b,c$} that satisfies $a^n+b^n=c^n$. In particular, Fermat's Last Theorem is valid.
\end{Lema}
\begin{proof}
Let us present direct proof.

\textit{Case 1}. Consider the equation $[(2^kd)^2]^n=(a^2)^{n}+(b^2)^{n}$ where $(2^kd)^2, a^2$ and $b^2$ are perfect square numbers, $a,b,d,n$ are \textit{any} odd, and $n>2$. By Corollary~\ref{square2}, it follows that $(2^{k}d)^2\in\mathbb{Q}^c$. 

Considering $b\leq a<c$, and solving for $(b^2)^{n}$ gives
\begin{equation}
\left[\frac{(2^{k}d)^2}{b^2}\right]^{n}=\left[\frac{a^2}{b^2}\right]^{n}+1.
\label{frac2}
\end{equation}

Since $(2^{k}d)^2\in\mathbb{Q}^c$ and $b^2$ is odd, then $[(2^{k}d)^2/b^2]$ is irrational. $[(2^{k}d)^2/b^2]$ can be simply expressed as $2^{h}d$ such that $2^{h}d\in\mathbb{Q}^c$. In particular, let $2^h\in\mathbb{Q}^c$ and $d$ odd integer.  

Substituting $2^hd=[(2^{k}d)^2/b^2]$ in \eqref{frac2}, gives
\begin{equation}
(2^{h}d)^{n}=\left[\left(\frac{a}{b}\right)^2\right]^{n}+1, 
\label{frac}
\end{equation}
where $2^hd\in\mathbb{Q}^c$, $a.b, d, n$ are \textit{any }odd, $n>2$. Additionally, $a/b\geq 1$ and $(2^hd)^{n}>1$.

Next, we will use a mathematical artifice to transfer the existing irrationality on the left-hand side of the equal sign in \eqref{frac} to the right-hand side. 

Let $2^{h}d$ be a positive even integer: 
\begin{enumerate}
    \item Suppose $a=b$. By Proposition \ref{a=b}, it is impossible because it contradicts that $2^{h}d$ is an integer. 

    \item Assume $b\mid a$.  By Lemma~\ref{bnodividea}, it is impossible because it contradicts that $2^{h}d$ is an integer. 

    \item Suppose $b\nmid a$. By Corollary~\ref{noracionales} and the fact that $1=1^n$, it is impossible because it contradicts that $2^{h}d$ is an integer. 
\end{enumerate}
\textit{Remark.} If $2^{h}d\in\mathbb{Z}^+$, then a necessary condition for \eqref{frac} is that \mbox{$(a/b)^2\in\mathbb{Q}^c$}. Therefore,  $a/b\in\mathbb{Q}^c$. It implies that $a$ or $b\in\mathbb{Q}^c$ with $a\neq b$.

\textit{Case 2}. Consider the equation $(c^2)^{n}=(a^2)^{n}+[(2^{k}d)^2]^n$ where $c^2,a^2$ are perfect square numbers, $a,c,d,n$ are \textit{any} odd, $n>2$. Analogously to Case~1, we have that 
$[(c/a)^2]{n}=1+(2^{h}d)^{n}$, with $2^hd\in\mathbb{Q}^c$, such that $a,c$ are odd integers with $c/a> 1$. Let $2^{h}d$ be a positive even integer. Then, $a$ or $c\in\mathbb{Q}^c$.

\textit{Case 3}. Consider the equation 
$(c^2)^{n}=[(2^{k}d)^2]^n+(b^2)^{n}$ where $b^2,c^2$
are perfect square numbers, $b,c,d,n$ are \textit{any} odd, $n>2$. Analogously to Case~1, we have that 
$[(c/b)^2]{n}=(2^{h}d)^{n}+1$, where $2^hd\in\mathbb{Q}^c$, and $b, c$ odd integers with $c/b> 1$. Let $2^{h}d$ be a positive even integer. Then, \mbox{$b$ or $c\in\mathbb{Q}^c$}.
\end{proof}
%
\subsubsection{Synthesis for n as an odd integer greater than 2}
\begin{Coro}
\label{FLTimpar}
Fermat's Last Theorem is valid for all $n$ odd integers greater than $2$.
\end{Coro}

\subsection{Synthesis for n as an integer greater than 2}
%
\begin{Teo}[Fermat's Last Theorem]
\label{FLT}
If $n$ is an integer greater than $2$, then there are no positive integers $a,b,c$ that satisfy the equation $a^n+b^n=c^n$.
\end{Teo}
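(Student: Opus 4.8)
The plan is to observe that the two preceding corollaries already dispose of the entire theorem once we recall that every integer exhausts the two parities. Concretely, the set of integers $n>2$ partitions into the even integers greater than $2$ and the odd integers greater than $2$, which is precisely the two-part division that the Cartesian method prescribed at the start of Section \ref{demo}. The synthesis step is therefore to combine Corollary \ref{FLTpar} and Corollary \ref{FLTimpar} by exhaustion of cases: whatever integer exponent $n>2$ is presented, it falls into one of the two families, and in each family Fermat's equation has already been shown to have no solution in positive integers satisfying the standing assumptions.

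First I would fix an arbitrary integer $n>2$ together with a hypothetical triple $(a,b,c)$ of positive integers with $b\le a<c$ and $\gcd(a,b,c)=1$ solving $a^n+b^n=c^n$; by the reduction discussed in the Assumptions, it suffices to rule out such a \emph{primitive} solution, since any solution can be divided through by its common factor. Next I would split on the parity of $n$. If $n$ is even, Corollary \ref{FLTpar} directly contradicts the existence of the triple. If $n$ is odd, Corollary \ref{FLTimpar} does the same. Since these two cases are mutually exclusive and jointly exhaustive for integers $n>2$, no admissible triple can exist for any such $n$, which is exactly the statement of the theorem.

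The main point to verify is not a new computation but rather a matching of hypotheses: I would check that the standing conventions under which the two corollaries were proved---namely $n>2$, the ordering $b\le a<c$, primitivity $\gcd(a,b,c)=1$, and the parity bookkeeping of Lemma \ref{abc_paresimpares} placing exactly one even entry in the triple---are the same conventions I invoke here, so that the appeal to each corollary is legitimate. Once that alignment is confirmed, there is no genuine obstacle remaining, because all of the substantive arguments (the Pythagorean-triple parity contradictions for the even exponents, and the perfect-square and arithmetic-structure reductions feeding into \mbox{Lemma \ref{2knimpar}} for the odd exponents) have been carried out upstream. In short, the hard part lies entirely in the two corollaries; the theorem itself is the final synthesis, and its proof amounts to the remark that $n$ even or $n$ odd exhausts all integers greater than $2$.
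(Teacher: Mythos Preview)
Your proposal is correct and matches the paper's own proof essentially line for line: the paper simply synthesizes Sections \ref{npar} and \ref{nimpar} (equivalently Corollaries \ref{FLTpar} and \ref{FLTimpar}) by splitting on the parity of $n>2$, exactly as you do. The only cosmetic difference is that the paper adds the remark that in $(a,b,c)$ an irrational of the form $2^h$ appears, but this is not needed for the final case exhaustion.
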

\begin{proof}
It follows from considering Section~\ref{npar} and Section~\ref{nimpar}. It has been proved that a necessary condition for \mbox{$a^n+b^n=c^n$}, with $n$ integer greater than~$2$, is that within $(a,b,c)$ there is at least an irrational number. Synthesizing, we confirm that Fermat's Last Theorem is valid.  
\end{proof}

\begin{flushright}
\parbox[t]{10.5cm}{\lettrine{I}{t is impossible} \textit{ for any number which is a power greater than the second to be written as the sum of two like powers. Rei demonstrationem mirabilem sane detexi. Hanc marginis exiguitas non caperet.}}
\rightline{{\rm --- Pierre de Fermat}}
\end{flushright}
%
%
\section{Final considerations}
\label{final}
%
\subsection{Applications of Fermat's Last Theorem}
\label{app}

The Scottish mathematician \textit{John Napier} published his method of logarithms in 1614, just some years before Fermat's claim. Since we aim to use the mathematical knowledge of that \textit{époque}, we will apply logarithms to show some applications of Theorem~\ref{FLT}, also known as the \textit{Fermat-Wiles Theorem}.

\begin{Lema}
\label{2knpar2}
Suppose $a,b,c$ are positive integers that satisfy $a^n+b^n=c^n$. Then, $n$ is irrational.
\end{Lema}
\begin{proof}
Our first task is to consider the Cartesian analysis method, so we divide the problem into simple parts and then synthesize it.
\begin{enumerate}
 \item Suppose that $a,b,c$ are positive integers where one of them has the form $2^k$ with $k\in\mathbb{Z}^+$, and $gcd(a,b,c)=1$.

\textit{Case 1}. Assume a primitive Fermat's equation $a^n+b^n=c^n$ with $c=2^k$. Then, $2^{kn}=a^n+b^n$.  Solving for the exponent $kn$ gives
\begin{align}
\label{ksimple}
    kn=\log_2(a^n+b^n).
\end{align}
Consider the logarithmic argument in \eqref{ksimple}.  By the Fundamental Theorem of Arithmetic, we claim that $a^n+b^n=2^rs$ where $r,s\in\mathbb{Z}^+$ and~$s$ is an odd integer. By Theorem~\ref{FLT}, it follows that $n\notin \mathbb{Z}^+$ for $n>2$.  Then $n\in\mathbb{Q}^*$ or $n\in\mathbb{Q}^c$.

Rewriting \eqref{ksimple} gives
\begin{align}
\label{kn}
    kn=r+\log_{2}(s).
\end{align}

\begin{enumerate}
    \item Suppose $s>1$. Clearly$^\ddagger$, $\log_2(s)\in\mathbb{Q}^c$. Since $k,r\in\mathbb{Z}^+$, then a necessary condition for \eqref{kn} is that $n\in\mathbb{Q}^c$.

    \item The case when $s=1$ is absurd because otherwise \eqref{kn} contradicts Theorem~\ref{FLT}. Therefore, $n\in\mathbb{Q}^c$.
\end{enumerate}
Since $n\notin\mathbb{Q}^*$, then $n\in\mathbb{Q}^c$.

\textit{Case 2.} Suppose $b=2^k$. Let $c^n-a^n=2^rs$. Similar to Case 1, $n\in\mathbb{Q}^c$.

\textit{Case 3.} Consider $a=2^k$. Let $c^n-b^n=2^rs$. Therefore, $n\in\mathbb{Q}^c$.

\item Suppose that $a,b,c$ are positive integers where two of them are odd while the other has the form $2^kd$ with $k\in\mathbb{Z}^+$ and $d$ is odd greater than $1$. Additionally, $gcd(a,b,c)=1$. Similar to the previous item, solving for $kn$ gives
\begin{align}
\label{kn2}
kn=r+\log_{2}(s)-\log_{2}(d^n).    
\end{align}
Notice that Theorem~\ref{FLT} implies $s\neq d^n$. Moreover, the sum is well-defined, and it is irrational whenever $n$ is an integer or rational$^\ddagger$. It is evident that $n\notin\mathbb{Q}^*$. Since $k$ is an integer, a necessary condition for \eqref{kn2} is that $n\in\mathbb{Q}^c$. 
\end{enumerate}

Certainly, Item 1 can be deduced from Item 2 with $d=1$. Combining Item 1 and 2, it follows that, in general, $n\in\mathbb{Q}^c$. Figure~\vref{fig:Fig1} illustrates the range of $n$.
\end{proof}
%

\begin{Lema}
There are no rational numbers $a,b,c,n$ such that $a^n+b^n=c^n$. 
\end{Lema}
\begin{proof}
Let $a=p/q, \ b=t/u, \ c=v/w\in\mathbb{Q}^*$ and $n\in\mathbb{R}^+$. Then, $q,u$, and $w$ are positive integers greater than $1$. Let us construct a Fermat's equation and do some algebra with those numbers.
\begin{align*}
    \bigg(\frac{v}{w}\bigg)^n=&\left(\frac{p}{q}\right)^n+\left(\frac{t}{u}\right)^n,\\
    \left(\frac{v}{w}\right)^n=&\ \frac{(pu)^n+(qt)^n}{(qu)^n},\\
    \left(\frac{vqu}{w}\right)^n=&\ (pu)^n+(qt)^n,\\
    \left(vqu\right)^n=&\ (puw)^n+(tqw)^n.
\end{align*}
    Theorem~\ref{FLT} assures that $n$ is not an integer greater than $2$. Besides, Lemma~\ref{2knpar2} states that $n$ is strictly irrational. As a result, \mbox{$n\not\in\mathbb{Q}^*$}. (See the range of $n$ in Figure~\ref{fig:Fig1}).
\end{proof}

\begin{Lema}
The angle $\theta$ in Figure~\ref{fig:Fig1} is irrational. 
\end{Lema}
\begin{proof}
It follows from the cosine's law, $c^2=a^2+b^2-2ab\cos \theta$, and Theorem~\ref{FLT}.
\end{proof}

\subsection{Principles of analysis using complex numbers}
Notice that we may easily generalize Theorem~\ref{FLT} for $a,b,c,n$ when they are nonzero integers, and \mbox{$n\notin\{0, \pm 1,\pm 2\}$}. Now that we have a complete analysis of Fermat's equation within real numbers, one may ask how this equation behaves within complex numbers. It could probably be simpler or even more complex. We do not pretend to answer that question here, but at least we leave the first step toward that purpose.

Consider DeMoivre's Theorem, named after French mathematician \textit{Abraham de Moivre}. (1667-1754). Observe that beyond real numbers, $c(n)=\sqrt[n]{a^n+b^n}$ is not a one-to-one function since it has multiple non-real solutions.
%
%
\subsection{Rules of the Cartesian Method}
\label{metodo}
%
\begin{Rule}
\label{rule1}
\upshape 
Never to accept anything for true which I did not clearly know to be such; that is to say, carefully to avoid precipitancy and prejudice, and to comprise nothing more in my judgment than what was presented to my mind so clearly and distinctly as to exclude all grounds of doubt.
\end{Rule}

\begin{Rule}
\label{rule2}
\upshape 
To divide each of the difficulties under examination into as many parts as possible and as might be necessary for its adequate solution.
\end{Rule}

\begin{Rule}
\label{rule3}
\upshape 
To conduct my thoughts in such a way that, by commencing with objects the simplest and easiest to know, I might ascend by little and little, and, as it were, step by step, to the knowledge of the more complex; assigning in thought a certain order even to those objects which, in their own nature, do not stand in a relation of antecedence and sequence.
\end{Rule}

\begin{Rule}
\label{rule4}
\upshape 
To make enumerations so complete and reviews so general that I might be assured that nothing was omitted.

\begin{flushright}
\parbox[t]{9cm}{\textit{God alone is the author of all the motions in the world.}}
\rightline{{\rm --- René Descartes}}
\end{flushright}
\end{Rule}

\section*{Acknowledgements}
\addcontentsline{toc}{section}{Acknowledgements}
The author is grateful to Danny Guerrero, \mbox{Ph.D.}, for his valuable support. danny.guerrero@unah.edu.hn. A special thanks to Jacques Gélinas, Ph.D. (Canada), for reviewing the manuscript and providing helpful suggestions. 
%
\section*{Notes}
\addcontentsline{toc}{section}{Notes}
%
\begin{tabularx}{\textwidth}{@{}>{\bfseries}l X@{}}
{$\dagger$} &  If $b\in \{1,2,3\}$, then there is no vertex $C$ in the arc $\beta \gamma$ of Figure~\ref{fig:Fig1}, such that $c\in \mathbb{Z}^+$ since  $\sqrt[n]{a^n+b^n}\in\mathbb{Q}^c$ for all $a\in\mathbb{Z}^+$ and $n>2$. The maximum number of vertices C that exist in the arc $\beta \gamma$ with $a,b,c\in \mathbb{Z}^+$ and $n\in \mathbb{R}^+$ is $h$, such that \mbox{$h\leq \lfloor a(\sqrt[3]{2}-1)\rfloor \lessapprox \lfloor 0$.$2599\ a\rfloor$}.\\
 & \\
{$\ddagger$} & By definition of logarithms, $\log_2(s)=x \Leftrightarrow 2^x=s$.  Recall that $s$ is odd and greater than $1$. Suppose that $x\in\mathbb{Q}^+$ such that $x=p/q$ and $q\neq 0$.  Then $2^p=s^q$ which is absurd, because the left-hand side of the equal sign is an even number while the right-hand side is an odd number.  Therefore, by contradiction, $\log_2(s)\in\mathbb{Q}^c$. (Similarly, $\log_{2}(d^n)\in\mathbb{Q}^c$).
\end{tabularx}
%
%

\footnotesize\noindent\scshape{Universidad Nacional Autónoma de Honduras}\\
\small\upshape\textit{E-mail:} hinunez@unah.hn

%

\end{document}